\documentclass[12pt]{elsarticle}
\usepackage[usenames]{color}
\usepackage{amssymb}
\usepackage{amsmath}
\usepackage{amsthm}
\usepackage{amsfonts}
\usepackage{amscd}
\usepackage{graphicx}
\usepackage[colorlinks=true,
linkcolor=webgreen,
filecolor=webbrown,
citecolor=webgreen]{hyperref}
\definecolor{webgreen}{rgb}{0,.5,0}
\definecolor{webbrown}{rgb}{.6,0,0}
\usepackage{color}
\usepackage{fullpage}
\usepackage{float}
\usepackage{graphics}
\usepackage{latexsym}
\usepackage{epsf}
\usepackage{bookmark}
\usepackage{mathrsfs}
\usepackage{hyperref}
\def\C{\mathbb{C}}
\def\Z{\mathbb{Z}}
\def\N{\mathbb{N}}
\def\cC{\mathcal C}
\def\cD{\mathcal D}
\def\diag{\hbox{diag}}
\def\hc{\hat c}

\newtheorem{prop}{Proposition}[section]
\newtheorem{cor}{Corollary}[section]

\begin{document}
\begin{frontmatter}
	\title{ Classification of the hypergeometric orthogonal polynomials via their recurrence coefficients}
\author{Luis Verde-Star}
 \address{
Department of Mathematics, Universidad Aut\'onoma Metropolitana, Iztapalapa, Mexico City,  Mexico}
\ead{verde@xanum.uam.mx}

\begin{abstract} 
We present a new  classification of the class $\cC$ of  all the hypergeometric orthogonal polynomial sequences.	The classification  uses  properties of the coefficients $\alpha_n$ and $\beta_n$ in the three-term recurrence relation satisfied by a  polynomial sequence $u_n(t)$. Such coefficients are rational functions of $n$ and are determined by a set of six parameters. In \cite{Uni} we obtained explicit expressions for the recurrence coefficients  for all the  hypergeometric and basic hypergeometric  orthogonal  polynomial sequences.
 
		The partial fraction decomposition of $\alpha_n$ is a  linear combination of five  linearly independent  functions of $n$ with coefficients $d_j$, for $0 \le j \le 4$, that are polynomials in  our six parameters.  For each value of a parameter $r$, associated with the eigenvalues, we classify the $\alpha_n$ according to the set of coefficients $d_j$ that are nonzero. There are certain particular values of $r$ that must be considered separately.

		Our classification yields a collection of 53 disjoint classes and it is different from the Askey scheme in several aspects. It is  similar to the classifications proposed by Koornwinder in \cite{KAsk, K2, K3}.

{\em AMS classification:\/} 33C45, 33D45.

{\em Keywords:\/  Hypergeometric orthogonal polynomials, re\-cur\-rence co\-effi\-cients, Askey scheme, partial fractions decomposition. }
\end{abstract}
\end{frontmatter}

\section{Introduction}
The class of hypergeometric orthogonal polynomial sequences  has been exten\-siv\-ely studied for a long time, since the times of Chebyshev, Jacobi, and Legendre. See \cite{Chi} and \cite{Hyp}. An early classification of the class appears in \cite{Chi}, pages 217--221, where Chihara classifies the sequences according to the type of functions of $n$ of the recurrence coefficients. Until recently the Askey scheme \cite{Hyp} was considered as the best description of the class and the relations among its members. In the recent papers \cite{KAsk, K2, K3} Koornwinder pointed out several deficiencies of the Askey scheme and proposed new ways to describe the class and its properties. Some of his constructions are based on our results in \cite{Uni}.

In \cite{Uni} we presented a unified construction of  all the hypergeometric orthogonal and  $q$-orthogonal polynomial sequences and proved that they sat\-isfy a gen\-er\-al\-ized dif\-fer\-ence-eigenvalue equation of order one, with respect to a Newtonian basis for the space ${\mathbb C}[t]$. We also obtained a uniform parametrization using 7 parameters which are the coefficients of three quadratic polynomials in $n$ that define the nodes of the Newtonian basis, the eigenvalues, and the generalized difference operator. The number of parameters can be reduced to 4, but that would complicate most of the development of the theory. 

The traditional way to express the recurrence coefficients $\alpha_n$ and $\beta_n$ is as a quotient of polynomials with the numerator written as a product of several factors of the form $n+c_k$, where the $c_k$ depend on some parameters. This is how it is done in most cases in \cite{Hyp}, and it is the source of numerous problems for the classification of the polynomial sequences and also for the description of the connections among them. From the  Wilson  and the Askey-Wilson polynomials,  which are considered as the most general in the Askey and the $q$-Askey schemes, it is not possible to obtain all the other sequences by giving appropriate values to the parameters.  As a consequence of this fact some limit processes are required to describe how to pass from one family to another one. We will see that the Wilson polynomials are particular elements of the class whose  $\alpha_n$ has five nonzero coefficients in its partial fraction decomposition, but for the Wilson polynomials one of them is always equal to 1, independent on the parameters. The  same situation occurs with the Askey-Wilson polynomials in the $q$-Askey scheme.

In the present paper we deal only with the class $\cC$ of hypergeometric orthogonal polynomial sequences. This class contains all the sequences in the Askey scheme \cite{Hyp}. 

Our main objective is to construct a classification of the class $\cC$ based on properties of the coefficients in the three-term recurrence relation 
		$u_{n+1}(t)=(t-\beta_n) u_n(t) - \alpha_n u_{n-1}(t)$. 
 For this purpose we obtain first several new properties of the recurrence coefficients, starting from general results obtained in \cite{Uni}.
 The new ingredient in this paper is the use of the partial fractions decomposition of the coefficients $\alpha_n$. The decomposition  is expressed as a linear combination  of linearly independent rational functions of $n$ and a parameter $r$ related with the sequence of eigenvalues. We also found that when $r$ is nonzero the sequence $\alpha_n$ is invariant under the change of variables $n \rightarrow -n + 2 -1/r$, which is an involution on the space of rational functions. This fact explains why the roots and the poles of $\alpha_n$ must appear by pairs and why the traditional way to express $\alpha_n$ is not sufficiently flexible.

 The parameter $r$ plays a central role in the classification. If $r$ is not in $\{0,1,1/2, 1/3\}$ the set of all the $\alpha_n$ associated with $r$, denoted by $\cC_r$, is divided into 16 nonempty disjoint classes. Since the poles of $\alpha_n$ are determined by $r$, it is easy to see that if $r \ne {\tilde r}$ then $\cC_r$ and $\cC_{\tilde r}$ are disjoint. 
For $r=0$  we found that $\cC_0$ is the set of all the polynomials   in $n$ of degree 4, 2, or 1 
 that have $n$ as a factor. Several polynomial sequences in the Askey scheme are associated with this set. 
 Each of $\cC_1$, $\cC_{1/2}$, and $\cC_{1/3}$ is divided into 12 nonempty and disjoint classes.

For every value of $r$ and every class in $\cC_r$  we can find explicit formulas for the general form of $\alpha_n$ in the given class in terms of our parameters. This allows us to construct concrete examples in every class.

 The approach used in this paper can also be used to obtain a classification of the $q$-orthogonal polynomials, for $q\ne 1.$

The paper is organized as follows.
In Section 2 we present a brief account of the construction of the bispectral $q$-hypergeometric orthogonal polynomials that we obtained in \cite{Uni} using our algebraic approach introduced in \cite{Mops} and \cite{Rec}. In Section 3 we consider the case with $q=1$ and obtain some new  properties of the recurrence coefficients. 
In Section 4 we present the partial fractions decomposition of the most general $\alpha_n$.
The classification defined by means of the partial fractions decomposition is presented in Section 5.
 In Section 6 we deal with the  classification of the sets $\cC_r$  with $r$ in $\{1, 1/2, 1/3\}$.  And finally, in Section 7, for each of the families of orthogonal polynomial sequences listed in the Askey scheme  we determine the class to which it is associated. We found that they are associated with only six classes.

\section{Construction of  bispectral hypergeometric orthogonal polynomials}

In this section we present a  brief account of the construction of the 
 bispectral hypergeometric orthogonal polynomials presented in our previous paper \cite{Uni}. 

Consider the linear  difference equation
\begin{equation}\label{eq:diffeq}
 s_{k+3} = z ( s_{k+2} -s_{k+1}) + s_k, \qquad k\ge 0,
\end{equation}
where $z$ is a nonzero complex number and $s_k$ is a sequence of complex numbers with initial terms $s_0,s_1,s_2$. Since the characteristic polynomial is $t^3 - z t^2 + z t -1$ we see that the product of the roots is equal to one, the sum of the roots is equal to $z$, and 1 is a root. Therefore 
the characteristic roots of the difference equation are $1$, $q$, and $q^{-1}$,  where $1 + q + q^{-1}=z$. If $z=3$ then $q=1$ is a triple root. If $z=-1$ then $q=-1$ is a double root, and if $z\ne 3$ and $z\ne -1$ then the roots $1, q, q^{-1}$ are distinct. Therefore the  general solution of \eqref{eq:diffeq} is of the form 
$s_k= d_0 + d_1 q^k + d_2 q^{-k}$ when $z\ne 3$ and $z \ne -1$ and becomes $s_k=d_0 + d_1 k + d_2 k^2$ when $z=3$, and $s_k= d_0 +d_1 (-1)^k + d_2 k ( -1)^k$ when $z=-1$. These lattices were considered by Bochner and Hahn in their classification of the classical and the $q$-orthogonal polynomials. Such lattices also appear in \cite{NSU}, where they are obtained from the solutions of  a second order difference equation \cite[eq. 3.1.12]{NSU} that must satisfy certain additional conditions.  

Let $x_k$, $h_k$, and $e_k$ be 3  solutions of \eqref{eq:diffeq}. These sequences will be used to construct a basis for the space  ${\mathbb C}[t]$, a linear operator $\cD$ on the space of polynomials, and a sequence of orthogonal polynomials $u_k(t)$ that are eigenfunctions of $\cD$ with eigenvalues $h_k$. The sequence $e_k$ is used to simplify the definition of the operator $\cD$ and provides additional parameters that are needed to construct a theory that can be applied to all the hypergeometric and basic hypergeometric orthogonal polynomials in the Askey schemes. 

The sequence $x_k$ determines the Newtonian basis $\{v_n(t): n\ge 0\}$ of the complex vector space ${\mathbb C}[t]$ of polynomials in $t$, defined by 
\begin{equation}\label{eq:Newton}
	v_n(t)= (t-x_0)(t-x_1)\cdots (t-x_{n-1}), \qquad n \ge 1,
\end{equation}
and $v_0(t)=1.$

We define the sequence $g_k$ by
\begin{equation} \label{eq:g}
g_k= x_{k-1} (h_k- h_0) + e_k, \qquad k \ge 1, 
\end{equation}
and $g_0=0$. 
This sequence satisfies a linear difference equation of order five. We add  the sequence $e_k$ to avoid some complicated restrictions on the initial values of $g_k$.  Let us suppose that $h_k \ne h_j$ if $k \ne j$, and $g_k \ne 0$ for $k \ge 1$. 

We use the basis $\{v_k: k \ge 0\}$ to define the linear operator $\cD$   by
\begin{equation}\label{eq:operD}
\cD v_k = h_k v_k + g_k v_{k-1}, \qquad k \ge 1.
\end{equation}
	Since $g_0=0 $ we see that $\cD t^n$ is equal to $ h_n t^n $ plus a  polynomial of lower degree. 
The operator $\cD$ is a generalized difference operator. 

	For $n \ge 0$ we define $u_n$ as the monic polynomial of degree $n$ which is an eigenfunction of $\cD$ with eigenvalue $h_n$. That is 
\begin{equation}\label{eq:eigenEq}
\cD u_n = h_n u_n, \qquad n \ge 0.
\end{equation}

 In \cite[p. 249]{Uni} we showed that
\begin{equation}\label{eq:u}
 u_n(t) = \sum_{k=0}^n c_{n,k} v_k(t), \qquad n \ge 0,
\end{equation}
 where the coefficients $ c_{n,k}$ are given by
 \begin{equation}\label{eq:cnk}
c_{n,k}=\prod_{j=k}^{n-1} \frac{g_{j+1}}{h_n -h_j}, \qquad 0 \le k \le n-1,
 \end{equation}
and $c_{n,n}=1$ for $n \ge 0$.
This expression for $u_n(t)$ was also obtained in \cite{VZ} using a different approach. The idea of representing orthogonal polynomials in terms of a Newtonian basis was introduced by Geronimus in \cite{Ger}.

The matrix $C=[c_{n,k}]$, where the coefficients $c_{n,k}$ are defined in \eqref{eq:cnk}, is lower triangular and all its entries in the main diagonal are equal to $1$. Therefore $C$ is invertible.     Let  $ C^{-1}=[{\hat c}_{n,k}]$. Using some properties of divided differences we proved in  \cite[p. 251]{Uni} that
\begin{equation}\label{eq:Cinv}
      \hc_{n,k}=\prod_{j=k+1}^n \frac{g_j}{h_k - h_j}, \qquad 0 \le k \le n-1,
\end{equation}
         and $\hc_{n,n}=1$ for $n \ge 0.$

 The entries in the 0-th column of $C^{-1}$ are
	\begin{equation}\label{eq:moments}
 \hc_{n,0} =\prod_{k=1}^n \frac{g_k}{h_0-h_k}, \qquad n \ge 1, 
	\end{equation}
and $\hc_{0,0}=1$. We denote them by $m_n=\hc_{n,0}$ for $ n \ge 0$.

In \cite{Uni} we also proved that the monic  polynomial sequence $u_n(t)$ satisfies a three-term recurrence relation of the form
\begin{equation}\label{eq:3term}
	u_{n+1}(t)= (t-\beta_n) u_n(t) - \alpha_n u_{n-1}(t), \qquad n \ge 1, 
\end{equation}
where the coefficients are given by
\begin{equation}\label{eq:beta}
	\beta_n= x_n + \frac{g_{n+1}}{h_n - h_{n+1}} -\frac{g_n}{h_{n-1} - h_n}, \qquad n\ge 0, 
\end{equation}
and 
\begin{equation}\label{eq:alpha}
 \alpha_n = \frac{g_n}{h_{n-1} -h_n} \left(\frac{g_{n-1}}{h_{n-2} - h_n} - \frac{g_n}{h_{n-1} - h_n} + \frac{g_{n+1}}{h_{n-1}-h_{n+1}} +x_n - x_{n-1} \right), \qquad n\ge 1. 
\end{equation}
Since $g_0=0$ the terms in the previous equations where $h_{-1}$ appears are equal to zero. 

If all the $\alpha_n$ are positive and the $\beta_n$ are real then the sequence $u_n$ is orthogonal with respect to a positive measure, and if all the $\alpha_n$ are nonzero then $u_n$ is orthogonal with respect to a not necessarily positive definite moments functional.

The numbers $m_n$  are the generalized moments of the polynomial sequence $u_k(t)$ with respect to the Newton basis $\{ v_k(t): k \ge 0 \}$. 
From \eqref{eq:moments} we see that  $m_n$ satisfies the  recurrence relation 
$$ m_{n+1}=\dfrac{g_n}{h_0 -h_n} m_n, \qquad n\ge 1. $$

When $z=3$ the characteristic polynomial of the difference equation  \eqref{eq:diffeq} has 1 as  a triple root and therefore every solution of \eqref{eq:diffeq} can be expressed in the form 
\begin{equation} \label{eq:gensol}
	s_k= d_0 + d_1 k + d_2 k (k-1),
\end{equation}
where $d_0, d_1,d_2$ are constants.
Therefore we can write the sequences $x_k$, $h_k$, and $e_k$ as follows
\begin{eqnarray}\label{eq:xhe}
	x_k& =& b_0 + b_1 k + b_2 k (k-1),  \cr
	h_k& =& a_0  + a_1 k + a_2 k (k-1),  \cr
	e_k& =& f_0 + f_1 k + f_2 k (k-1).  
\end{eqnarray}
Since $g_0= e_0=0$ we see that $f_0$ must be zero. The constant $a_0$ can be taken as zero because the eigenvalues $h_k$  appear only in terms of the form  $h_n - h_m$ in all the relevant formulas.

From \eqref{eq:xhe} it is clear that the sequences $x_k, h_k, e_k$ are well defined for all $k$ in $\Z$.

The properties of the polynomial sequence represented by the matrix $C$ can be expressed in terms of matrix equations as we describe next. 
Let $X$ be the shift matrix defined by $X_{k,k+1}=1$ for $k\ge 0$ and all the other entries equal zero.
 The matrix $S$ is the transpose of $X$. The diagonal matrix $H=\diag(h_0,h_1,h_2,\ldots)$ is called the matrix of eigenvalues. The diagonal matrix $F=\diag(x_0,x_1,x_2,\ldots)$ is the matrix of nodes associated with the Newtonian basis. The diagonal matrix $G=\diag(g_1,g_2,g_3,\ldots)$ is associated with the operator $\cD$. The Jacobi matrix $L$ is defined by $L_{k,k+1}=1,\ L_{k,k}=\beta_k,\ L_{k+1,k}=\alpha_{k+1}$, for $k\ge 0$.

 With respect to the Newtonian basis the operator of multiplication by the independent variable $t$ is represented by $ X+F$, and the operator $\cD$ by $( H + S G)$. 
 The three-term recurrence relation \eqref{eq:3term} corresponds to the equation
 \begin{equation}\label{eq:LCXF}
	 LC=C(X+F)
 \end{equation}
and the difference-eigenvalue equation \eqref{eq:eigenEq} is expressed  in terms of matrices as
\begin{equation}\label{eq:HC}
	C(H+SG)=HC.
\end{equation}
See \cite{Uni} for a more detailed account of these matrix equations. 

\section{Some properties of the recurrence coefficients}

We consider now the sequences of orthogonal polynomials obtained when the parameter $z$ in the difference equation \eqref{eq:diffeq} is equal to $3$  and therefore $q=1$. These are the sequences in the Askey scheme. See \cite{Hyp}.

Let us note that several equations in the previous section contain terms of the form $ g_k/(h_n- h_{n-m})$.
In order to simplify those equations it is convenient to define 
\begin{equation}\label{eq:gamma}
	\gamma(n,m)=\dfrac{ m \, g_{n+1-m}}{h_n-h_{n-m}}, \qquad n,m \in \N.
\end{equation}

From \eqref{eq:cnk} we obtain
\begin{equation}\label{eq:cnkgamma}
c_{n,k}=\prod_{j=1}^{n-k} \dfrac{\gamma(n,j)}{j},
\end{equation}
where $c_{n,k}$ are the coefficients of the orthogonal polynomials $u_n(t)$, defined in \eqref{eq:u}.

Using  equations   \eqref{eq:g}  and \eqref{eq:xhe} we obtain
\begin{equation}\label{eq:gammaexpl}
	\gamma(n,m)=\dfrac{x_{n-m}  (n-m+1) (a_1 + (n-m) a_2)+(n-m+1) ( f_1+ (n-m) f_2)}{a_1+ ( 2 n -m -1) a_2 }.
\end{equation}

It is easy to verify that $\gamma(n,m)$ is invariant under the rescaling
\begin{equation}\label{eq:rescaling}
 (a_1, a_2, f_1, f_2) \rightarrow t (a_1, a_2, f_1, f_2), \qquad t\in \C. 
\end{equation}

Now we can write \eqref{eq:beta} and \eqref{eq:alpha} as
\begin{equation}\label{eq:betagam}
	\beta_n=x_n -(\gamma(n+1,1)-\gamma(n,1)),
\end{equation}
and
\begin{equation}\label{eq:alphagam}
	\alpha_n= \gamma(n,1) \left( \dfrac{ \gamma(n,0) -2 \gamma(n,1) + \gamma(n,2)}{2} -(x_n-x_{n-1} ) \right).
\end{equation}
Let us note that these equations show that $\beta_n$ and $\alpha_n$ are invariant under the rescaling defined in \eqref{eq:rescaling}.

A simple computation shows that if $a_1=0$ then $(n-1)^2$ is a factor of the denominator of $\alpha_n$ and therefore $\alpha_1$ is not defined. The factor $(n-1)^2$ in the denominator 
 can be cancelled only when $f_1=0$, but if $a_1=f_1=0$ then  $n-2$ is a factor of the numerator of $\alpha_n$ which can not be cancelled, since in this case the roots of the denominator are $1/2$ and $1/3$.  Therefore, if we want to have $\alpha_n\neq 0$ for at least $n=1,2,3$ then $a_1$ must be nonzero. This fact and the invariance under the rescaling  \eqref{eq:rescaling} allow us to introduce a change of parameters that simplifies most of our equations and reduces the number of parameters.

 We use the substitutions
 \begin{equation}\label{eq:subsr}
  a_2= r a_1, \quad f_1= s_1 a_1, \quad f_2= s_2 a_1,
\end{equation}
and obtain
\begin{equation}\label{eq:gammar}
	\gamma(n,m,r)=\dfrac{(n-m+1) ((nr-m r  +1) x_{n-m} + (n-m) s_2 + s_1)}  { (2 n -m -1) r +1},
\end{equation}
\begin{equation}\label{eq:betar}
	\beta(n,r)= x_n -(\gamma(n+1,1,r) - \gamma(n,1,r)),
\end{equation}
and
\begin{equation}\label{eq:alphar}
	\alpha(n,r)= \gamma(n,1,r) \left( \dfrac{\gamma(n,0,r) - 2 \gamma(n,1,r) + \gamma(n,2,r)}{2}  -( x_n- x_{n-1} )\right).
\end{equation}
Note that $ a_1$ and $a_2$ do not appear in these equations.
The functions $\gamma, \alpha,$ and $\beta$ are also functions of the vector of  parameters $v=( b_0, b_1, b_2, s_1, s_2)$, but in order to simplify the notation we only  write $n$ and $r$ as arguments of $\alpha$ and $\beta$, and $n, m$ and $r$ as arguments of $\gamma$. We will consider later some substitutions in the vector of parameters.  

Since we want $\alpha(n,r)$ and $\beta(n,r)$ to be defined for  $n\ge 0$, and $r$ appears in their denominators, it is easy to verify that the denominators are nonzero if $r$ is not of the form $-1/m$, where $m $ is a positive integer. From now on we will suppose that $r$ satisfies such condition.  We will see that the values $0, 1, 1/2,$ and $1/3$ of $r$ correspond to recurrence coefficients with especial properties. 

From \eqref{eq:betar} and \eqref{eq:alphar} we obtain
\begin{eqnarray}\label{ eq:initial}
	\beta(0,r)&=&-s_1, \cr
	\alpha(1,r)&=& \dfrac{(s_1+ b_0) ( s_2 - r s_1)}{r+1}.
\end{eqnarray}
Since $ \alpha(1,r)$ must be nonzero we see that $s_1+ b_0$  and $ s_2 - r s_1$ must be nonzero.

Let $N$ denote the operator of substitution of parameters that sends the vector  $v=(b_0, b_1, b_2, s_1, s_2)$  to $-v$. 
It is easy to verify that $\alpha(n,r)$ is invariant under $N$ and also that if we apply $N $ to $\beta(n,r)$ we obtain $-\beta(n,r)$. 

We consider next the case with $r=0$, obtained when $a_2=0$ and hence the eigenvalues are of the form  $h_k= a_1 k$,  for $k\ge 0$, and $a_1 \ne 0$.

Substitution of $r=0$ in \eqref{eq:alphar} and \eqref{eq:betar} and a simple computation gives
\begin{equation}\label{eq:alrzero}
	\alpha(n,0)= n ( (n-1) b_2 +s_2) ( (n^2-3 n +2) b_2 +(n-1) ( b_1+ s_2) +b_0 +s_1),
\end{equation}
and
\begin{equation}\label{eq:berzero}
	\beta(n,0)= -( 2 n (n-1) b_2 +n ( b_1+ 2 s_2) + s_1).
\end{equation}
It is easy to see that $\alpha(n,0)$ is a polynomial that has a root at zero and  whose degree can be 4, 2, or 1, and $\beta(n,0)$ can be identically zero or  a polynomial whose degree can be 2, 1, or zero.

{\bf Definition.} We denote by ${\mathcal C}_0$ the set of all the polynomials $\alpha(n,0)$ of the form given in equation \eqref{eq:alrzero}.

If $b_2$ is nonzero then $\alpha(n,0)$ has degree 4, and for any three given numbers $t_1,t_2,t_3$ we can find values of $b_1, b_0, s_1, s_2$ for which the roots of  $\alpha(n,0)$  are $0, t_1,t_2, t_3$.

If $b_2=0$ and $s_2$ and $b_1+ s_2$ are nonzero then $\alpha(n,0)$ has degree 2. In this case for any given number $t$ we can find values for $s_1$ and $b_0$ for which zero and $t$ are roots of   $\alpha(n,0)$. If $b_2=0$ and $s_2=-b_1$ then $\alpha(n,0)=-b_1(b_0+s_1) n$. Therefore ${\mathcal C}_0$ is the set of all the nonzero polynomials in $n$ whose degree is 4, 2, or 1 and  have a root at $n=0$.

We consider next the case with  $r\neq 0$. In this case the eigenvalues $h_k$ are given by a quadratic function of $k$.
Let  $r\ne 0$. We  define  an involution $R$ on the set of rational functions of a complex variable $z$ as follows
\begin{equation}\label{eq:invol}
	R( w(z))= w\left(-z + 2 -\frac{1}{r}\right), 
\end{equation}
where $w(z)$ is a rational function of $z$ with some parameters.

Note that $R$ is a linear and multiplicative operator on the set of  rational functions of $z$,  $R(R(w(z)))=w(z)$ for every rational function $w(z)$, and if $p(z)$ is a polynomial then $R(p(z))$ is a polynomial of the same degree as $p(z)$. 

Since $\alpha(n,r)$ and $\beta(n,r)$ are rational functions of $n$ we can apply to them the involution $R$ with respect to the variable $n$.
A straightforward computation gives us the following proposition.

\begin{prop}\label{RalRbe}
	\begin{eqnarray} \label{eq:prop31}
		R(\alpha(n,r))& =&\alpha(n,r), \cr
		R(\beta(n,r)) &=& \beta(n-1,r), \cr
		\alpha(n,r) &=& \dfrac{-r^2 \gamma(n,1,r) R(\gamma(n,1,r))}{(2nr -r+1)( 2 n r -3 r +1)}. 
	\end{eqnarray}
\end{prop}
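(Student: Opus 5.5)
The plan is to prove the third identity first and then get the invariance of $\alpha(n,r)$ from it almost for free. The relation for $\beta$ is checked separately by the same kind of linear polynomial verification.

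\textbf{Setup.} Put
$$P(k)=(k+1)\bigl((kr+1)x_k+k s_2+s_1\bigr),$$
a polynomial of degree at most $4$ in $k$. By \eqref{eq:gammar} we have $\gamma(n,m,r)=P(n-m)/((2n-m-1)r+1)$. In particular
$$\gamma(n,0,r)=\frac{P(n)}{2nr-r+1},\qquad \gamma(n,1,r)=\frac{P(n-1)}{2nr-2r+1},\qquad \gamma(n,2,r)=\frac{P(n-2)}{2nr-3r+1}.$$
Under $R$, that is $n\mapsto -n+2-1/r$, the linear forms transform as follows:
$$2nr-r+1\mapsto-(2nr-3r+1),\qquad 2nr-3r+1\mapsto-(2nr-r+1),\qquad 2nr-2r+1\mapsto-(2nr-2r+1).$$
Hence $R(\gamma(n,1,r))=-P(1-n-1/r)/(2nr-2r+1)$. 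The denominator $(2nr-r+1)(2nr-3r+1)$ in the claimed formula is therefore $R$-invariant.

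\textbf{Third identity.} By \eqref{eq:alphar}, after dividing out the common factor $\gamma(n,1,r)$, the third identity is equivalent to
$$\frac{P(n)}{2nr-r+1}-\frac{2P(n-1)}{2nr-2r+1}+\frac{P(n-2)}{2nr-3r+1}-2(x_n-x_{n-1})=\frac{2r^2P(1-n-1/r)}{(2nr-2r+1)(2nr-r+1)(2nr-3r+1)}.$$
Clearing the three denominators turns this into a polynomial identity in $n$. Both sides are linear in the parameter vector $v=(b_0,b_1,b_2,s_1,s_2)$, since $P$ and $x_k$ are. It therefore suffices to verify it separately for each of the five basis choices of $v$.

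In each case the identity reduces to comparing polynomials of degree at most about $6$ in $n$ with coefficients in $\C[r]$. This is a finite direct expansion, or equivalently agreement at enough values of $n$. The cheap check points are the roots $n=(r-1)/(2r)$, $(2r-1)/(2r)$ and $(3r-1)/(2r)$ of the denominators, where the residues on both sides must match.

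\textbf{Invariance of $\alpha$.} Once the third identity holds, $R(\alpha(n,r))=\alpha(n,r)$ follows immediately. The numerator $\gamma(n,1,r)\,R(\gamma(n,1,r))$ is fixed by $R$, because $R$ is a multiplicative involution, and the denominator is fixed as computed above.

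\textbf{Relation for $\beta$.} By \eqref{eq:betar},
$$\beta(n,r)=x_n-\frac{P(n)}{2nr+1}+\frac{P(n-1)}{2nr-2r+1}.$$
Applying $R$ sends $2nr+1\mapsto-(2nr-4r+1)$ and fixes $2nr-2r+1$ up to sign. The claim $R(\beta(n,r))=\beta(n-1,r)$ therefore becomes the polynomial identity
$$x_{2-n-1/r}+\frac{P(1-n-1/r)}{2nr-4r+1}-\frac{P(-n-1/r)}{2nr-2r+1}=x_{n-1}-\frac{P(n-1)}{2nr-2r+1}+\frac{P(n-2)}{2nr-4r+1}.$$
This is again linear in $v$, so it is verified componentwise in the same way.

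\textbf{Expected obstacle.} The main difficulty is the term $x_n-x_{n-1}$ (and $x_n$ itself in $\beta$), which is not individually $R$-symmetric. It has to cancel against the polynomial parts of the partial fractions of the $P$-terms. The key step is to notice that $P(k)$ factors through $(kr+1)$ only partially, and to separate off the polynomial part of each quotient $P(n-m)/((2n-m-1)r+1)$ before comparing. I would do this first for the $b_2$-component, which carries the highest degree, and treat the remaining four components similarly.
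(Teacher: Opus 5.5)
Your treatment of the third identity is sound, and it is essentially the paper's ``straightforward computation'', organized more efficiently. After cancelling $\gamma(n,1,r)$, the reduced identity you write is correct and linear in $v$; its $s_1$- and $b_0$-components check by hand. Deducing $R(\alpha(n,r))=\alpha(n,r)$ from the factorization, using that $R$ is a multiplicative involution and that $(2nr-r+1)(2nr-3r+1)$ is $R$-invariant, is a real saving over verifying the first identity separately.

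The $\beta$ part, however, has an off-by-one error, and the componentwise verification would fail as written. Since $R$ sends $n$ to $2-n-1/r$, it sends $P(n)$ to $P(2-n-1/r)$ and $P(n-1)$ to $P(1-n-1/r)$. You wrote $P(1-n-1/r)$ and $P(-n-1/r)$, shifting both arguments by one too many. The identity you display is therefore false. For $v=(0,0,0,s_1,0)$, where $x_k\equiv 0$ and $P(k)=(k+1)s_1$, your left side minus your right side equals
\[
s_1\Bigl(\frac{1}{2nr-2r+1}-\frac{1}{2nr-4r+1}\Bigr)=\frac{-2r\,s_1}{(2nr-2r+1)(2nr-4r+1)}\neq 0 .
\]
The identity you actually need to check is
\[
x_{2-n-1/r}+\frac{P(2-n-1/r)}{2nr-4r+1}-\frac{P(1-n-1/r)}{2nr-2r+1}=x_{n-1}-\frac{P(n-1)}{2nr-2r+1}+\frac{P(n-2)}{2nr-4r+1}.
\]
For the same component its two sides differ by
\[
s_1\Bigl(\frac{4-2n-1/r}{2nr-4r+1}-\frac{2-2n-1/r}{2nr-2r+1}\Bigr)=s_1\Bigl(-\frac{1}{r}+\frac{1}{r}\Bigr)=0,
\]
as it should. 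With this correction, the linear-in-$v$ componentwise check goes through and your proposal proves the proposition.
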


From  the third equation in the previous proposition we can obtain factorizations of $\alpha(n,r)$. We obtain next a factorization of  $\alpha(n,r)$  that is related with $\beta(n,r)$ in a simple way.

Define
\begin{equation}\label{eq:lambda}
	\lambda(n,r)=\dfrac{(n r - 2 r +1) \gamma(n,1,r)}{n (2 n r -3 r +1)},
\end{equation}
and
\begin{equation}\label{eq:mu}
\mu(n,r)= R(\lambda(n,r)).
\end{equation}
Since $R$ is an involution we also have $\lambda(n,r)= R(\mu(n,r))$.

These definitions combined with \eqref{eq:gammar} give us 
\begin{equation}\label{eq:lambdaexpl}
	\lambda(n,r)=\dfrac{(rn -2r+1) ( (r n-r+1 ) x_{n-1}  + s_2 (n-1) + s1)}{(2rn -2r+1) ( 2rn -3r+1)},
\end{equation}
and
\begin{equation}\label{eq:muexpl}
	\mu(n,r)=\dfrac{n ( (n-1) ( x_{-n+1} r^2 + (2 r n -r +1) b_2 - r( b_1-s_2) ) - r s_1 + s_2) }{(2nr-r+1) ( 2nr-2r +1)}.
\end{equation}

\begin{prop}\label{Rfactor} 
	\begin{eqnarray*}
		\alpha(n,r)&=& \lambda(n,r) \mu(n,r),\qquad n \ge 1, \cr
		\beta(n,r)&=& x_0 -\lambda(n+1,r) -\mu(n,r),\qquad n\ge 0.
	\end{eqnarray*}
\end{prop}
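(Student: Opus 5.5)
The product formula follows from the third identity of Proposition~\ref{RalRbe}, once we see how $R$ acts on the rational prefactor in \eqref{eq:lambda}. Write $\lambda(n,r)=\rho(n)\gamma(n,1,r)$ with
$$\rho(n)=\frac{nr-2r+1}{n(2nr-3r+1)}.$$
Since $R$ is multiplicative, $\mu(n,r)=R(\rho(n))\,R(\gamma(n,1,r))$. We compute $R(\rho)$ by substituting $n\mapsto -n+2-1/r$:
\begin{itemize}
\item $nr-2r+1\mapsto -nr$;
\item $n\mapsto -(nr-2r+1)/r$;
\item $2nr-3r+1\mapsto -(2nr-r+1)$.
\end{itemize}
Hence
$$R(\rho)=\frac{-nr\cdot r}{(nr-2r+1)(2nr-r+1)}.$$
The product $\rho\,R(\rho)$ then telescopes: the factors $n$ and $nr-2r+1$ cancel, giving
$$\rho\,R(\rho)=\frac{-r^2}{(2nr-3r+1)(2nr-r+1)}.$$
This is exactly the prefactor in Proposition~\ref{RalRbe}, so $\lambda\mu=\alpha(n,r)$ for $n\ge1$, where all denominators are nonzero under the standing assumption on $r$.

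For the $\beta$ formula we work directly from \eqref{eq:betar}, which reads $\beta(n,r)=x_n-\gamma(n+1,1,r)+\gamma(n,1,r)$. We first rewrite $\lambda(n+1,r)$ and $\mu(n,r)$ as explicit functions of $\gamma$. From \eqref{eq:gammar} with $m=1$,
$$\gamma(n,1,r)=\frac{n\bigl((nr-r+1)x_{n-1}+(n-1)s_2+s_1\bigr)}{2nr-2r+1},$$
and $\lambda(n+1,r)$ is obtained from \eqref{eq:lambdaexpl} by shifting $n$. Setting $P(n)=\lambda(n+1,r)+\mu(n,r)$, we must show
$$\gamma(n+1,1,r)-\gamma(n,1,r)+x_0-x_n=P(n).$$
Both sides are rational functions of $n$, with $x_k$ quadratic in $k$. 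The plan is to clear the common denominator and compare coefficients, or more economically to verify the identity as a polynomial identity in $n$ after multiplying by the denominators. Using \eqref{eq:muexpl} and \eqref{eq:lambdaexpl}, the denominators on the right are $(2nr+1)(2nr-r+1)$ and $(2nr-r+1)(2nr-2r+1)$. The left side has denominators $2nr+1$ and $2nr-2r+1$, so a common denominator $(2nr+1)(2nr-r+1)(2nr-2r+1)$ works for everything.

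A slicker route, which I would try first, uses the involution. Since $R(\mu(n,r))=\lambda(n,r)$, a partial-fraction split of $\gamma(n,1,r)$ into an "$R$-even" and "$R$-odd" part should align $\gamma(n,1,r)$ with $\lambda(n,r)+R(\lambda(n,r))$ up to polynomial terms. Proposition~\ref{RalRbe} gives $R(\beta(n,r))=\beta(n-1,r)$, which is consistent with the claimed formula: applying $R$ to $x_0-\lambda(n+1)-\mu(n)$ yields $x_0-\mu(n-1)-\lambda(n)$ after checking that $R$ sends $n+1$ to $(-n+1)+2-1/r$, i.e.\ the shift intertwines correctly. Consequently it suffices to verify the identity up to a quantity invariant under the relevant symmetry, for instance by comparing residues at the poles $n=-1/(2r)$, $n=(r-1)/(2r)$ and $n=(2r-1)/(2r)$ together with the polynomial part at infinity.

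The main obstacle is this bookkeeping: matching the residues at the three simple poles and the quadratic polynomial part, where $x_n$, $x_{n-1}$ and $x_{-n+1}$ all contribute. If the residue approach becomes messy, I would fall back on brute-force polynomial comparison in $n$ with parameters $b_0,b_1,b_2,s_1,s_2,r$. Since both sides have degree at most 2 in $n$ plus proper fractions, this is a finite check.
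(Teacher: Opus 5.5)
Your proposal is correct and is essentially the paper's argument, which is only "simple computations": the product formula reduces to the third identity of Proposition~\ref{RalRbe} through your computation $\rho\,R(\rho)=-r^2/\bigl((2nr-r+1)(2nr-3r+1)\bigr)$, and the $\beta$ formula is the finite rational-function check you set up. That check is quick because it is linear in $(b_0,b_1,b_2,s_1,s_2)$ and because $\gamma(n+1,1,r)-\lambda(n+1,r)=n\bigl((nr+1)x_n+ns_2+s_1\bigr)/(2nr-r+1)$; the one case you omit is $r=0$, which the paper also claims, where $R$ is undefined, so $\mu$ must be taken from \eqref{eq:muexpl} and both identities follow by continuity in $r$ or by direct comparison with \eqref{eq:alrzero} and \eqref{eq:berzero}.
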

The proof follows from some simple computations.  Let us recall that $x_0= b_0$.
	This proposition is valid for any acceptable $r$, including  $r=0$,   and holds for  all the polynomial sequences in the Askey scheme. 

A factorization of $\alpha_n$ analogous to the one in Proposition \ref{Rfactor} appears in  \cite{Hyp}, where two sequences $A_n, C_n$  are  used to define the recurrence coefficients of some of the polynomials in the Askey scheme. For example, for the Wilson polynomials in equation (9.1.5), for the continuous Hahn polynomials in equation (9.4.4), and for the Hahn polynomials in equation (9.5.4).

 In \cite{Hyp} the numerator of the coefficients $\alpha_n$ for all the sequences in the Askey scheme is  expressed  as a product of factors of the form $n+t$, where $t$ is a constant. The most general $\alpha(n,r)$ can be written in the factorized form  
 \begin{equation}\label{eq:factorized}
	 \alpha(n,r)= \dfrac{n ((n-2)r +1)  \prod_{k=1}^3 ( c_k n -1) ( ( c_k n -2 c_k +1) r+ c_k)}{(2rn -r +1) ( 2 rn -2r+1)^2 ( 2 rn - 3 r +1)},
\end{equation}
where the  $c_k$, for $1 \le k \le 3$,  are constants and they are the reciprocals of some of the roots of $\alpha$. Note that here we have four parameters. This expression for $\alpha$ is more convenient than the ones in \cite{Hyp}. 

It is easy to transform \eqref{eq:factorized} into  the third equation in \eqref{eq:prop31}. For $r$ fixed we can find $b_2, b_1, b_0, s_2$ in terms of $c_1,c_2,c_3$ and $s_1$, where $s_1$ is arbitrary.  The reverse transformation requires the computation of the roots of $\gamma(n,1,r)$. 

\section{Partial fractions decomposition of $\alpha(n,r)$}

In this section we obtain a partial fractions decomposition of the rational function $\alpha(n,r)$, when  $r$ is nonzero.   We will use the coefficients in the decomposition  to  classify  the sequences $\alpha(n,r)$. The basic properties of partial fractions decompositions are presented in \cite{Rat}.

From \eqref{eq:alphar} or from  \eqref{eq:prop31}  we obtain
\begin{equation}\label{eq:alpharexpl}
	\alpha(n,r)= \dfrac{ n  ( n r - 2 r +1)\, (-r R(w)  ) \, w  }{(2 n r -r +1)   (2 n r  -2r +1)^2 (2 n r -3 r +1)},
\end{equation}
where
\begin{equation*}
	w= (nr -r +1) x_{n-1} +(n-1)s_2 + s_1.
\end{equation*}
Note that $w$ is a polynomial in $n$ of degree 3 and so is $R(w)$. Therefore,
 as a function of $n$,  $\alpha(n,r)$ is a rational function and its  numerator has degree 8 and its denominator has degree 4. It has simple poles at $n= (r-1)/2r$ and $n=(3 r -1)/2r$ and a double pole at $n=(2r-1)/2r$. Therefore, in the general case, the partial fractions decomposition of $\alpha(n,r)$ is a polynomial of degree 4 plus a linear combination of 3 linearly independent proper rational functions whose poles depend on $r$.

We define the functions $y_k$  of $n$ and $r$ as follows
\begin{equation}\label{eq:ypoly}
	y_0=(4 r^2 n)^2  (n r-2 r +1)^2, \qquad y_1=4 r^3 n (n r -2 r+1), \qquad y_2=1,
\end{equation}

\begin{equation}\label{eq:raty}
	y_3=\dfrac{(r-1)(3 r-1)}{(2n r- r  +1)(2n r-3 r+1)}, \qquad y_4=\dfrac{-(2r-1)^2}{(2nr-2r+1)^2},
\end{equation}
and the polynomials  
\begin{eqnarray}\label{eq:auxpoly}
	p_1&=&(b_2^2+ ( 8 b_2(b_0+b_1)- 4 b_1^2) r^2+ 4 b_2 ( 2 s_2 + b_1 -2 b_2)r - 2 b_2^2,\cr
	p_2&=& (4 b_0 +2 b_1 -b_2) r^3 + (8 s_1 + 4 s_2 + 4 b_0 - b_2) r^2 +(b_2-4 s_2 - 2 b_1) r + b_2,\cr
	p_3&=&(2 b_1-4 b_0 -3 b_2)r^3+ (8 s_1 -4 s_2 + 4 b_0 - b_2) r^2+ 3 b_2- 4 s_2 - 2 b_1) r + b_2,\cr
	p_4&=&(8s_1+4 b_0) r^2+( 2 b_2-4 s_2 - 2 b_1) r + b_2.
\end{eqnarray}

Now define
\begin{equation}\label{eq:coefd}
d_0=b_2^2, \qquad d_1=p_1, \qquad d_3= p_2\, p_3,\qquad  d_4=p_4^2, \qquad d_2=d_4- d_3.
\end{equation}

{\bf Remark.} The functions $y_k$, for $0 \le k \le 4$, are invariant under the involution $R$ and the polynomials $ d_k $, for $0 \le k \le 4$,  are invariant under the change of parameters $N$.

Now it is easy to verify that
\begin{equation}\label{eq:pfd}
	\alpha(n,r)=\dfrac{1}{256 r^6} ( d_0 y_0 + d_1 y_1+ d_2 y_2 + d_3 y_3 + d_4 y_4).
\end{equation}
This is the partial fractions decomposition of $\alpha(n,r)$. Note that the rational functions that correspond to the two simple poles are grouped in $y_3$. From \eqref{eq:raty} we notice that $y_3$ becomes zero when $r=1$ or $r=1/3$, and $y_4$ becomes zero when $r=1/2$. We will deal with these especial cases later. In this section we suppose that $r$ is not in $\{0,1,1/2,1/3 \}$ and it is not the reciprocal of a negative integer. Let us note that $\alpha(n,r)$ is always invariant under $R$, even when some of the coefficients $d_k$ are zero.

For a fixed $r$ every vector of the form  $(d_0, d_1, d_4-d_3, d_3, d_4)$ determines $\alpha(n,r)$ by \eqref{eq:pfd}.
The coefficients $d_k$, for $0 \le k \le 4$, can also be expressed in terms of $r$ and  the numbers $c_k$  of \eqref{eq:factorized}.

The sequence $\beta(n,r)$ can be written as 
\begin{equation}\label{eq:betapfd}
	\beta(n,r)= \dfrac{n (nr -r +1)}{2 r} \left(\dfrac{p_4}{(2 r n -2 r +1)(2rn+1)} - b_2\right) - s_1.
\end{equation}
This equation shows that, given $r \ne 0$, the function  $\beta(n,r)$ is determined by $p_4, b_2$, and  $s_1$. Let us  recall that $p_4^2=d_4$, $b_2^2= d_0$, and $- s_1=\beta(0,r)$ for all $r$.

Given a nonzero number $r$ and a vector  $v=(b_0,b_1,b_2, s_1, s_2)$ equation \eqref{eq:alpharexpl} determines a unique function $\alpha(n,r)$, but the same function can also be obtained with other vectors of parameters and the same $r$. For example, with $-v$, since $\alpha(n,r)$ is invariant under the substitution $N$.  

We define next  a modified  $\beta(n,r)$ as follows.
\begin{equation}\label{eq:tilbetapfd}
	\tilde\beta(n,r)= \dfrac{n (nr -r +1)}{2 r} \left(\dfrac{p_4}{(2 r n -2 r +1)(2rn+1)} + b_2\right) - s_1.
\end{equation}
We also have
\begin{equation}\label{eq:tilbetminusbet}
	\tilde\beta(n,r)= \beta(n,r) +\dfrac{n ( nr -r+1) b_2}{r},
\end{equation}
and thus if $b_2=0$, that is if $x_n$ is a polynomial in $n$ of degree at most one,  then  $\beta(n,r)=\tilde\beta(n,r)$.

\begin{prop}\label{betas}
	Let $r\ne 0 $,   let $\alpha(n,r)$ and $\beta(n,r)$ be the functions obtained with the vector of parameters $v=(b_0,b_1,b_2,s_1,s_2)$,  and let $\hat\alpha(n,r)$ and $\hat\beta(n,r)$ be the functions obtained with the vector $\hat v= (\hat b_0, \hat b_1, \hat b_2, \hat s_1, \hat s_2)$,   
	using equations \eqref{eq:alpharexpl} and  \eqref{eq:betapfd}. Let $\tilde\beta(n,r)$ be as defined in \eqref{eq:tilbetapfd}.

Define the following vectors of parameters 
\begin{eqnarray}\label{eq:vectors}
	w_1&=&(b_0+ s_1-\hat s_1,\  b_1,\  b_2,\  \hat s_1,\  s_2+ (\hat s_1-s_1) r ),\cr
	w_2&=&(-b_0 - s_1 -\hat s_1,\  -b_1,\  -b_2,\  \hat s_1,\  -s_2+(s_1+\hat s_1) r ),\cr
	w_3&=&\left( s_1 -\hat s_1 -\frac{s_2}{r},\ b_1 -  2 b_2  -\frac{b_2}{r},\ - b_2,\  \hat s_1,\ -r  (b_0 +s_1-\hat s_1)\right), \cr
 	w_4&=&\left(-s_1 -\hat s_1 + \frac{s_2}{r},\ - b_1  +2 b_2+\frac{b_2}{r},\  b_2,\   \hat s_1, \ r  ( b_0 +s_1+\hat s_1) \right). 
\end{eqnarray}
	 
	Let $T_k$ denote the substitution operator that sends $\hat v$ to $w_k$, for $1\le k \le 4$.
	Then   $ T_k \hat \alpha(n,r) = \alpha(n,r)$,  for $1\le k \le 4$, 
 and we have
	\begin{eqnarray}\label{eq:Tk}
		T_1(\hat\beta(n,r))&=&  \beta(n,r) +s_1-\hat s_1, \cr
		T_2(\hat\beta(n,r))&=&-(\beta(n,r) +s_1 +\hat s_1), \cr 
		T_3(\hat\beta(n,r))&=& \tilde\beta(n,r) +s_1 -\hat s_1, \cr
		T_4(\hat\beta(n,r))&=&-(\tilde\beta(n,r) +s_1+\hat s_1).
	\end{eqnarray}
\end{prop}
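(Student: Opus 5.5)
Since \(r\ne 0\) is fixed, \(\alpha(n,r)\) depends on the parameter vector only through the partial fraction coefficients \(d_0,\dots,d_4\) of \eqref{eq:pfd}. It is therefore enough to show that each substitution \(T_k\) sends \(\hat d_j\) to \(d_j\) for \(j=0,1,3,4\); the coefficient \(d_2=d_4-d_3\) then follows automatically. For \(\beta\) we use \eqref{eq:betapfd}: \(\beta(n,r)\) is determined by \(p_4\), \(b_2\), \(s_1\), so we only need to track how these three quantities transform. The fourth component of every \(w_k\) is \(\hat s_1\), so under \(T_k\) the constant term \(-\hat s_1\) stays as it is. This accounts for the shifts \(s_1-\hat s_1\) and \(s_1+\hat s_1\) in \eqref{eq:Tk}.

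First we handle \(T_1\). It replaces \(b_0\) by \(b_0+\delta\) and \(s_2\) by \(s_2-r\delta\), where \(\delta=s_1-\hat s_1\), and it changes \(s_1\) as well. We check that the combinations entering \(p_1,\dots,p_4\) are unchanged. In \(p_4\), the term \(8s_1+4b_0\) shifts by \(-8\delta+4\delta\) and \(-4s_2r\) shifts by \(4r^2\delta\); these cancel. The same bookkeeping works for \(p_1\), \(p_2\), \(p_3\). So \(T_1\) fixes every \(p_i\) and \(b_2\), and \(\alpha\) is preserved. Since \(p_4\) and \(b_2\) are unchanged, \eqref{eq:betapfd} gives \(T_1\hat\beta=\beta+s_1-\hat s_1\).

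Next, \(T_2\) is \(T_1\) composed with the negation \(N\) applied to the \(v\)-part. We use the invariance of \(\alpha\) under \(N\) noted in Section 3, together with the fact that \(N\) sends \((p_4,b_2)\) to \(-(p_4,b_2)\). Reading off \eqref{eq:betapfd} then gives the second formula. Similarly, \(T_4\) is \(T_3\) composed with \(N\), so it suffices to treat \(T_3\).

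For \(T_3\) the claim is that \(b_2\mapsto -b_2\) while \(p_4\) is preserved. Then \(d_0=b_2^2\) is clearly preserved, and comparing \eqref{eq:betapfd} with \eqref{eq:tilbetapfd} yields \(T_3\hat\beta=\tilde\beta+s_1-\hat s_1\). To verify \(p_4\) we substitute directly. The coefficient \(8\hat s_1+4(s_1-\hat s_1-s_2/r)\), multiplied by \(r^2\), combines with \(-4r\cdot(-r(b_0+s_1-\hat s_1))\) to give \((8s_1+4b_0)r^2-4rs_2\). The \(b_1,b_2\) terms also match: \(2(-b_2)-2(b_1-2b_2-b_2/r)\), times \(r\), plus \(-b_2\) equals \((2b_2-2b_1)r+b_2\).

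The main obstacle is showing that \(p_1\) and \(d_3=p_2p_3\) are invariant under \(T_3\). These are genuinely quadratic expressions, and \(T_3\) mixes \(b_0\) with \(s_2\) and \(b_1\) with \(b_2\). I expect that under \(T_3\) the factors \(p_2\) and \(p_3\) are either each preserved or exchanged up to a common sign, and I would check this first. If that fails, I would fall back on a structural argument. Here \(T_3\) amounts to \(\gamma(n,1,r)\leftrightarrow R(\gamma(n,1,r))\) (up to a sign and the reparametrization). The third identity of Proposition \ref{RalRbe} is symmetric under this exchange, so \(\alpha\) would be invariant without expanding the \(d_j\) at all. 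The remaining step would be to verify that \(T_3\) maps \(\hat w\) to \(-R(w)/r\) up to the normalizing factor; this is a direct polynomial identity in \(n\) involving cubic polynomials only.
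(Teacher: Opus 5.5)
\textbf{Verdict.} Your approach is the paper's own, direct substitution, and the parts you carry out are correct. $T_1$ fixes $b_2$ and each $p_j$. The vectors $w_2,w_4$ are exactly $w_1,w_3$ with $v$ replaced by $-v$, so $N$ reduces $T_2,T_4$ to $T_1,T_3$. Under $T_3$ you correctly get $b_2\mapsto -b_2$ and $p_4\mapsto p_4$, which together with \eqref{eq:betapfd} gives all four $\beta$-identities.

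\textbf{The gap.} The one nontrivial claim, $T_3\hat\alpha=\alpha$, is never proved. You state an expectation about $p_1$ and $p_2p_3$, and then describe a fallback whose target identity is wrong as stated. If $T_3$ sent $w$ to $-R(w)/r$, then \eqref{eq:alpharexpl} would give $T_3\hat\alpha=r^{-2}\alpha$, not $\alpha$. Likewise, $\gamma(n,1,r)$ is not exchanged with $R(\gamma(n,1,r))$ up to a sign. It is exchanged only up to the factor $rn/(rn-2r+1)$, whose $R$-image is its reciprocal.

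\textbf{The identity that closes it.} Put $w=(nr-r+1)x_{n-1}+(n-1)s_2+s_1$ and $\bar n=-n+2-1/r$. Substituting $w_1$ leaves $w$ unchanged: the shifts of $b_0,s_1,s_2$ cancel. Substituting $w_3$ gives exactly
\[
w\big|_{w_3}=(rn-r+1)\Bigl[(n-1)b_1-(n-1)\bigl(n+\tfrac{1}{r}\bigr)b_2-\tfrac{s_2}{r}\Bigr]-r(n-1)b_0+s_1=R(w).
\]
To see the last equality, use $r\bar n-r+1=-r(n-1)$, $\bar n-1=-(rn-r+1)/r$ and $\bar n-2=-(rn+1)/r$. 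In \eqref{eq:alpharexpl}, $\alpha$ is $wR(w)$ times a factor free of parameters. Since $R$ is an involution commuting with parameter substitution, $T_3\hat\alpha=\alpha$ follows at once, with no expansion of $p_1$ needed.

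\textbf{Your first guess, in exact form.} This also confirms your expectation about $p_2,p_3$. The quantities $p_2,p_3,p_4$ equal $8r^2w$ evaluated at $n=\frac{3r-1}{2r},\frac{r-1}{2r},\frac{2r-1}{2r}$ respectively. $R$ swaps the first two points and fixes the third. Hence $T_3$ exchanges $p_2$ and $p_3$ with no sign change, and fixes $p_4$.

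\textbf{If you check $p_1$ directly.} The printed $p_1$ and $p_3$ have misplaced parentheses. They should read
\[
p_1=(b_2^2+8b_2(b_0+b_1)-4b_1^2)r^2+4b_2(2s_2+b_1-2b_2)r-2b_2^2,
\]
\[
p_3=(2b_1-4b_0-3b_2)r^3+(8s_1-4s_2+4b_0-b_2)r^2+(3b_2-4s_2-2b_1)r+b_2.
\]
With this reading, $p_1=8rb_2(rb_0+s_2)+Q(b_1,b_2)$. The first term is fixed because $T_3$ sends $rb_0+s_2\mapsto-(rb_0+s_2)$ and $b_2\mapsto-b_2$. The remainder $Q$ is invariant under $(b_1,b_2)\mapsto(b_1-(2+1/r)b_2,\,-b_2)$.
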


{\em Proof.} The proof is obtained by straightforward computations of the substitution operators $T_k$  acting on $\alpha(n,r)$ and $\beta(n,r)$.

Let us note that in the four equations in \eqref{eq:Tk} the right-hand side equals $-\hat s_1$ when $n=0$.

\begin{cor}\label{equalalfas}
If $\hat v=v$ then  $\hat\alpha(n,r)=\alpha(n,r)$ and $\hat\beta(n,r)=\beta(n,r)$ and the vectors $w_k$, for $1\le k \le 4$, become
\begin{eqnarray}\label{eq:simplevec}
	w_1&=&(b_0,\  b_1,\  b_2,\  s_1,\  s_2 ),\cr
	w_2&=&(-b_0 -2 s_1,\  -b_1,\  -b_2,\   s_1,\  -s_2+   2 r s_1 ),\cr
	w_3&=&\left( -\frac{s_2}{r},\ b_1 -  2 b_2  -\frac{b_2}{r},\ - b_2,\  s_1,\ -r  b_0 )\right), \cr
 	w_4&=&\left(-2 s_1  + \frac{s_2}{r},\ - b_1  +2 b_2+\frac{b_2}{r},\  b_2,\   s_1, \ r  ( b_0 + 2 s_1) \right), 
\end{eqnarray}
	$\alpha(n,r)$ is invariant under the substitutions $T_k$, for $1\le k \le 4$, and the equations \eqref{eq:Tk}
become
	\begin{eqnarray}\label{eq:Tksimple}
		T_1(\beta(n,r))&=&  \beta(n,r), \cr
		T_2(\beta(n,r))&=&-(\beta(n,r) + 2 s_1 ), \cr 
		T_3(\beta(n,r))&=& \tilde\beta(n,r), \cr
		T_4(\beta(n,r))&=&-(\tilde\beta(n,r) + 2 s_1).
	\end{eqnarray}
\end{cor}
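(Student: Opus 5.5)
The corollary is a direct specialization of Proposition \ref{betas}, so the plan is to substitute $\hat v = v$, that is $\hat b_j = b_j$ and $\hat s_j = s_j$, into that proposition and read off each claim.

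First, since $\hat\alpha(n,r)$ and $\hat\beta(n,r)$ are computed from \eqref{eq:alpharexpl} and \eqref{eq:betapfd} with the same $r$ and the same parameter vector, they coincide with $\alpha(n,r)$ and $\beta(n,r)$. Next, put $\hat s_1 = s_1$ into the four vectors in \eqref{eq:vectors} and simplify each coordinate:
\begin{itemize}
\item In $w_1$, the terms $s_1 - \hat s_1$ and $(\hat s_1 - s_1)r$ vanish, so $w_1 = v$.
\item In $w_2$, the first coordinate becomes $-b_0 - 2s_1$ and the last becomes $-s_2 + 2rs_1$.
\item In $w_3$, the first coordinate becomes $-s_2/r$ and the last becomes $-rb_0$.
\item In $w_4$, the first coordinate becomes $-2s_1 + s_2/r$ and the last becomes $r(b_0 + 2s_1)$.
\end{itemize}
These are exactly the vectors in \eqref{eq:simplevec}.

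With $\hat v = v$, the operator $T_k$ becomes the substitution sending $v$ to $w_k$. The identity $T_k\hat\alpha(n,r) = \alpha(n,r)$ from Proposition \ref{betas} then reads $T_k\alpha(n,r) = \alpha(n,r)$, which is the asserted invariance. Likewise, setting $\hat s_1 = s_1$ in \eqref{eq:Tk}, the shifts $s_1 - \hat s_1$ vanish and $s_1 + \hat s_1$ becomes $2s_1$. This yields \eqref{eq:Tksimple} directly.

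There is no real obstacle. The only point needing care is interpretive: $T_k$ must be read as a substitution acting on the symbols of the parameter vector of the same function. Once that is understood, each claim is an immediate specialization. If one wanted an independent check, one could verify the invariance of $\alpha$ under, say, $T_3$ by checking that the coefficients $d_0,\dots,d_4$ of \eqref{eq:pfd} are unchanged, since by \eqref{eq:pfd} they determine $\alpha(n,r)$ for fixed $r$. That check is not needed, however, given Proposition \ref{betas}.
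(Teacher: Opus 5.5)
Your proposal is correct and matches the paper's intended argument: the paper gives no separate proof, and the corollary is simply Proposition~\ref{betas} specialized to $\hat v = v$ (only $\hat s_1 = s_1$ actually enters the $w_k$). Your coordinate-by-coordinate simplification of the vectors $w_k$ and of \eqref{eq:Tk} is accurate, and the specialization is legitimate because those identities hold for all independent values of $v$ and $\hat s_1$, hence also when $\hat s_1 = s_1$.
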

	From this result we see that if $b_2$ is nonzero then for a given $\alpha(n,r)$  there correspond up to four different functions $\beta(n,r)$.

	\section{A classification of the hypergeometric orthogonal polynomial sequences}

	In this section we propose a new way to classify the hypergeometric orthogonal polynomial sequences.  We look at the different forms that the  coefficients of the three-term recurrence relation can have. In particular, we use the partial fractions decomposition of $\alpha(n,r)$ given in \eqref{eq:pfd}.

The parameter $r=a_2/a_1$ plays a central role in our construction of the classification.
If  $r$ is not in $\{0, 1, 1/2, 1/3\}$  then 
 the set  $\{y_0,y_1,y_2,y_3,y_4\}$ of functions of $n$ is clearly linearly independent. Therefore, for such values of $r$  we can classify the functions $\alpha(n,r)$ according to which of the 5 coefficients $d_k$ in the partial fractions decomposition are nonzero.

Notice that the coefficients $d_k$ are not independent.  Since $d_2=d_4-d_3$ it is clear that  if two of $d_2,d_3,d_4$ are zero so is the other one. Note also that when $\alpha(n,r)$ is a polynomial we have $d_3=d_4=0$ and hence $d_2=0$. In that case $\alpha(n,r)$ is a polynomial in $n$,  of degree 4 or degree 2, that has a root at $n=0$, and thus it coincides with an element of the set ${\mathcal C}_0$, that is, the same polynomial $\alpha(n,r)$ can be obtained by  taking $r=0$ and giving  appropriate values to the other parameters. 

For $r \in \C$ let $\cC_r$ be the set of all the functions $\alpha(n,r)$ defined in \eqref{eq:alphar} and let $\cC$ be the union of all the sets $\cC_r$.

 When $r$ is in $\{0,1,1/2, 1/3\}$ the set $\cC_r$  has some especial properties.  The set $\cC_0$ was described in section 3.  The other cases  will be considered later.

Let $r$ be a complex number that is not an element of  $\{0,1,1/2, 1/3\}$.  We construct a collection of subsets of $\cC_r$ as follows. To each $\alpha(n,r)$ in $\cC_r$ we associate the vector $(d_0,d_1, d_2, d_3, d_4)$ of the coefficients in the partial fractions decomposition  of $\alpha(n,r)$ given in \eqref{eq:pfd}. For each nonempty subset $A$ of $\{0,1,2,3,4\}$ we define the set $\cC_r(A)$ as the set of all the $\alpha(n,r)$ for which $d_k$ is nonzero if and only if $k \in A$. Some of the sets $\cC_r(A)$ turn out to be empty. We will see that there are only  19 sets $\cC_r(A)$ that are nonempty. From the definition it follows that if $A$ and $B$ are subsets of  $\{0,1,2,3,4\}$ such that $A \ne B$ then  $\cC_r(A)$ and  $\cC_r(B)$ are disjoint.

 If $A=\{j_1, j_2,\ldots,j_k\}$ is a subset of $\{0,1,2,3,4\}$, in order to simplify the notation we will write $\cC_r(j_1 j_2\cdots j_k)$ instead of $\cC_r(\{j_1, j_2,\ldots,j_k\})$, for example, if $A=\{1,3,4\}$ we write $\cC_r(134)$, with the elements of $A$ in increasing order.

Each set $\cC_r(A)$ is determined by some relations among the parameters $b_0, b_1, b_2, s_1, s_2$ that are obtained as follows.  We solve first the equations $d_k=0$, for $k$ not in $A$, for the values of $b_1,b_2, s_1, s_2$. This yields equations that express some of parameters in terms of the others and $r$ and $b_0$. Then we substitute such parameters in the polynomials $d_j$, for $j$ in $A$, and we obtain expressions that are required to  be nonzero and which are the coefficients in the partial fractions decomposition of the $\alpha(n,r)$ that belong to $\cC_r(A)$.

Since $d_2= d_4-d_3$ there are no solutions for sets of equations of the form  $d_k=0$ that contain exactly one of $d_2=0, d_3=0, d_4=0$. Each one of these 3 cases appears in 4 of the 31 possible sets of equations that correspond to the nonempty subsets of $\{0,1,2,3,4\}$. Therefore there are at most 19 nonempty classes $\cC_r(A)$. We will see that those 19 classes are indeed nonempty.

A direct computation yields
\begin{equation}\label{eq:alfa1r}
	\alpha(1,r)=\dfrac{(s_1+ b_0) ( s_2 - r s_1)}{r+1}, \qquad r\in \C, \quad r\ne -1.
\end{equation}
Since $\alpha(1,r)$ must be nonzero we must have $b_0 \ne -s_1$  and $ s_2 \ne r s_1$. 

If $r$ is not in $\{0,1,\frac{1}{2},\frac{1}{3}\}$ then the 19 classes are: 
 $$\cC_r(01234),$$  
 $$\cC_r(1234),\quad  \cC_r(0234),\quad  \cC_r(0134),\quad  \cC_r(0124),\quad  \cC_r(0123),$$
 $$\cC_r(234),\quad  \cC_r(134),\quad  \cC_r(124),\quad  \cC_r(123), \quad  \cC_r(034),\quad  \cC_r(024),\quad   \cC_r(023),$$
 $$\cC_r(34),\quad  \cC_r(24), \quad  \cC_r(23), \quad  \cC_r(01),$$
 $$ \cC_r(0), \quad  \cC_r(1).$$

Let us recall that all the $\alpha(n,r)$ in $\cC_r$ are invariant under the involution $R$.

Since $y_0(n,r)= 16 r^4 n^2 (rn-2 r +1)^2$  and $y_1(n,r)=4 r^3 n (rn-2r +1)$,
we see that the elements of $\cC_r(0)$ and $\cC_r(01)$  are polynomials in $n$ of degree 4 with a  zero at $n=0$ and  the elements of $\cC_r(1)$ are polynomials in $n$ of degree 2 with a zero at $n=0$. Therefore if $\alpha(n,r)$ is  an element of the union of $\cC_r(0), \cC_r(1)$ and $\cC_r(01)$  then it is  also an  element of the class $\cC_0$, that is $\alpha(n,r)$ can be obtained taking $r=0$ and giving appropriate values to the other parameters.

Let us note that the case $\alpha(n,r)=t$ for all $n\ge 1$, where $t$ is a constant,  is not in any of the 19 nonempty classes. Such case appears when $r$ is in $\{1, 1/2, 1/3\}$.

We present next some examples that  show how the classes $\cC_r(A)$ are characterized by means of some relations among the parameters $b_0,b_1,b_2, s_1, s_2$ and values that some of the parameters must have.

For the class $\cC_r(23)$ we first solve the polynomial system $ d_0=0, d_1=0, d_4=0$ with unknowns $ b_1,b_2,s_1,s_2$ and we obtain
$$ b_1=0, \qquad b_2=0, \qquad  s_2=2 r s_1 + r b_0$$
and $s_1, b_0$ are arbitrary numbers. Then we substitute these equations in $d_2$ and $d_3$  and obtain  
$$ d_2=64 r^6 (s_1 + b_0)^2, \qquad d_3=-64 r^6 (s_1 + b_0)^2.$$
Therefore we must have $s_1 + b_0\ne 0$, and by substitution in the general form of $\alpha(n,r)$ we obtain
\begin{equation}\label{eq:alf23}
\alpha(n,r)= \dfrac{(s_1+b_0)^2 r n ( rn -2r +1)}{(2rn -r+1)(2rn-3r+1)}, \qquad b_0 \ne -s_1.
\end{equation}
This is the general form of an element in $\cC_r(23)$.
In this case we get $\beta(n,r) = -s_1$, where $s_1$ is arbitrary.

The Gegenbauer polynomials \cite[eq. 9.8.22]{Hyp} are associated with this class. Their $\alpha(n,r)$ is obtained  with the  parameters  
$$r=\dfrac{1}{2 \lambda +1},\quad b_0=1,\quad b_1=0, \quad b_2=0, \quad  s_1=0, \quad s_2=r.$$

For the class $\cC_r(234)$ we solve the equations $ d_0=0$ and $d_1=0$ for $ b_1,b_2, s_1, s_2$ and we get $b_1=b_2=0$ and $s_1, s_2$ arbitrary. Substitution of these values in $d_2, d_3, d_4$ gives us 
\begin{eqnarray*}
d_2 &=& 16 r^4 ( b_0 r + s_2)^2, \cr
d_3 &=& -16 ( b_0 r^2 + ( b_0 + 2 s_1 +s_2) r -s_2)(b_0 r^2 +(-b_0 -2 s_1 +s_2)r +s_2), \cr
d_4 &=& 16 r^2 ( b_0 r +2 s_1 r -s_2)^2.
\end{eqnarray*}
 These equations impose some restrictions on $b_0, s_1, s_2$, since $d_2,d_3, d_4$ must be nonzero. 

 The general form of an element $\alpha(n,r)$ in $\cC_r(234)$ is
 \begin{equation}\label{eq:C234}
	 \alpha(n,r)= \dfrac{  -r n (rn -2r +1) w\, R(w) }{(2rn -r+1)(2rn-2r+1)^2(2rn-3r+1)},
 \end{equation}
where
$w= (rn-r+1)b_0 +(n-1)s_2 + s_1$ and $b_0, s_1, s_2$ must satisfy the restrictions mentioned above.

The $\alpha(n,r$) of the  Jacobi polynomials is in $\cC_r(234)$. It is obtained with 
$$r=\dfrac{1}{\alpha +\beta +2}, \quad b_0=1, \quad b_1=0, \quad b_2=0, \quad s_1=\dfrac{\alpha-\beta}{\alpha+\beta+2}, \quad s_2=\dfrac{1}{\alpha+\beta+2}, $$
where $\alpha$ and $\beta$ are the parameters used in \cite[eq. 9.8.5]{Hyp}.
The Pseudo-Jacobi polynomials are also associated with $\cC_r(234)$.

Now we consider the class $\cC_r(134)$. In this case we solve $d_0=0, d_2=0$ and we obtain $b_2=0$, $b_1$ and $s_2$ arbitrary,  and
$$s_1=\dfrac{(2-r^2) b_1^2+( 4 s_2-4 r b_0) b_1 + 4(r b_0+s_2)^2}{8 r b_1}.$$
Substitution in the general formulas for $d_1, d_3,d_4$ gives us
$d_1=-4 r^2 b_1^2$,  
$$d_3=r^2 b_1^{-2} (2 r b_0 + 2 s_2 + r b_1)^2(2 r b_0 + 2 s_2 - r b_1)^2,$$
and $d_4=d_3$. Therefore $b_1$, $2 r b_0 + 2 s_2 + r b_1$, and $2 r b_0 + 2 s_2 - r b_1$ must be nonzero.

In this case the general form of $\alpha(n,r)$ is
\begin{equation}\label{eq:alf134}
	\alpha(n,r)=\dfrac{-r n (rn - 2 r +1) w R(w)}{(2rn -r+1)(2rn -2r+1)^2 (2 rn - 3r +1)}
\end{equation}
where
$$w=(8 b_1 r)^{-1}(4 (r b_0+s_2) (r b_0 + s_2 +(2rn-2r+1) b_1)+((8 n^2 - 16 n + 7)r^2+(8n -8)r+2)b_1^2).$$

The class $\cC_r(123)$ is determined by $d_0=0, d_4=0$ and $d_1, d_2, d_3$ nonzero. These conditions give us $b_1=4r s_1+2 r b_0 -2 s_2, b_2=0$ and $b_0, s_1, s_2$  arbitrary, but such that $2 s_1 r + b_0 r -s_1\ne 0$, $ (2 s_1+b_0)r^2 -(s_2-b_0 )r - s_2 \ne 0$, and  $ (2 s_1+b_0)r^2 -(s_2+b_0 )r + s_2 \ne 0$. 
This class has an interesting property. The $\alpha(n,r)$ obtained when we take $s_1=0$ and the other parameters satisfying the conditions listed above is also in $\cC_r(123)$, it has the expression
\begin{equation}\label{eq:alfbeteq0}
	\alpha(n,r)=\dfrac{-n ( (n-2)r +1) ( (n-1) ( b_0 r -s_2) r - s_2) ( (n-1) ( b_0 r -s_2) + b_0)}{(2 r n -r +1)( 2 rn -3 n +1)},
\end{equation}
and it is the general form of all the $\alpha(n,r)$ for which the corresponding $\beta(n,r)=0$  for all $n \ge 0$. It is well-known that the Jacobi matrices with main diagonal equal to zero correspond to symmetric moment functionals. See \cite[Thm. 4.3]{Chi}.

\section{The classes with $r$ in  $\{1,\frac{1}{2},\frac{1}{3} \}$}

In this section we consider the sets $\cC_r(A)$ when $r=a_2/a_1$ is in $\{ 1, 1/2, 1/3\}$. Let us recall that we must have
$$\alpha(1,r)=\dfrac{(s_1+b_0) ( s_2- r s_1)}{r+1}\ne 0,$$
for every $r \ne -1$. Therefore $s_1+b_0$ and $ s_2- r s_1$ must be nonzero.

\subsection{The classes with $r=1$ }

Putting $r=1$  in \eqref{eq:alpharexpl} we obtain 
\begin{equation}\label{eq:alfn1}
	\alpha(n,1)=\dfrac{ (n ( x_{n-1} + s_2) + s_1 - s_2) ( (n-1) x_{-n} + s_2) - s_1 + s_2)}{4(2n-1)^2},
\end{equation}
which is the general form of $\alpha(n,1)$.
We also have
$$y_0(n,1)=16 n^2 (n-1)^2, \qquad y_1(n,1)=4 n (n-1), \qquad y_2(n,1)=1, $$
and
$$  y_3(n,1)=0, \qquad y_4(n,1)= \dfrac{-1}{(2n-1)^2}.$$

In this case we have a  class $\cC_1(A)$ for very nonempty subset of $\{0,1,2,4\}$. Let us note that $ y_0(1,1)=y_1(1,1)=0$ and thus the classes $\cC_1(0), \cC_1(1)$ and $\cC_1(01)$ are not acceptable because they yield $\alpha(1,1)=0$. The other 12 classes are nonempty and are acceptable.

The class $\cC_1(2)$ is determined by $b_1=0, b_2=0, s_2=2 s_1 +b_0$  and $s_1+b_0\ne 0$, where $s_1$ and $b_0$ are arbitrary. In this case we have
\begin{eqnarray}\label{eq:z2}
	\alpha(n,1)&=& \dfrac{(s_1+b_0)^2}{4}, \qquad n\ge 1,\cr
	\beta(n,1) &=& -s_1, \qquad n\ge 0.
\end{eqnarray}
The Chebyshev families are associated with this class.

For $\cC_1(124)$ we have $b_2=0$ and $b_1$, $4 b_1^2 + 16(s_2 - 2 s_1 -b_0)b_1+16(s_2 +b_0)^2$, and $4 s_1+2 b_0 - 2 s_2 -b_1$ must be nonzero. The general $\alpha(n,1)$ in this class is
\begin{equation}\label{eq:z124}
	\alpha(n,1)=\dfrac{-(n^2 b_1+(s_2+b_0 -b_1)n + s_1-s_2)(n^2 b_1-(s_2+b_0+ b_1)n + s_1 + b_0)}{4(2 n-1)^2}.
\end{equation}

For $\cC_1(24)$ we have $b_1=b_2=0$, $s_2+b_0 \ne 0$, and $ 2 s_1+b_0-s_2 \ne 0$. Its elements are expressed as
\begin{equation}\label{eq:z24}
	\alpha(n,1)=\dfrac{((s_2+b_0)n + s_1 - s_2)((s_2+b_0)n -s_1-b_0)}{4 (2n-1)^2}.
\end{equation}
Note that the numerator has degree 2 and does not vanish at $n=0$.
There are other six classes whose elements have $(2n-1)^2$ as a factor of the denominator.  

The elements of $\cC_1(02), \cC_1(12)$ and $\cC_1(012)$ are polynomials of degree at most 4 and have nonzero constant term  because $y_2=1$. Therefore those classes are not contained in $\cC_0$.

For $r=1$ the classes are 
$$ \cC_1(0124),$$
$$\cC_1(124),\quad  \cC_1(024),\quad \cC_1(014),\quad \cC_1(012),   $$
$$\cC_1(24), \quad \cC_1(14),\quad \cC_1(04), \quad \cC_1(12),\quad  \cC_1(02),  $$
$$ \cC_1(2),\quad  \cC_1(4). $$

\subsection{The classes with $r=\frac{1}{3}$}

When $r=1/3$ the involution $R$ becomes $R(u(n))=u(-n-1)$ for any rational function $u(n)$. Therefore  the elements of $\cC_{1/3}$ satisfy $\alpha(n,1/3)=\alpha(-n-1,1/3)$. 

From \eqref{eq:alpharexpl} we obtain 
\begin{equation}\label{eq:alfn13}
	\alpha(n,1/3)=\dfrac{((n+2) x_{n-1} +3((n-1)s_2+ s_1)) ((n-1) x_{-n-2} + 3 ((n+2) s_2-s_1))}{4 (2n+1)^2}.
	\end{equation}
This is the general form of the elements in $\cC_{1/3}$.

In this case we have
$$y_0(n,1/3)=\dfrac{16}{729} n^2 (n+1)^2, \qquad y_1(n,1/3)=\dfrac{4}{81} n (n+1), \qquad y_2(n,1/3)=1, $$
and
$$  y_3(n,1/3)=0, \qquad y_4(n,1/3)= \dfrac{-1}{(2n+1)^2}.$$
Therefore  there is a class $\cC_{1/3}(A)$ for every nonempty subset of $\{0,1,2,4\}$. 

The elements of $\cC_{1/3}(0), \cC_{1/3}(1)$ and $\cC_{1/3}(01)$ are polynomials with constant term equal to zero and thus we can consider that these classes are contained in $\cC_0$. 

The class $\cC_{1/3}(2)$ is determined by $b_1=b_2=0$, $s_1=(3 s_2-b_0)/2$ and $3 s_2 + b_0 \ne 0$. These conditions give us
\begin{equation*}
	\alpha(n,1/3)=\dfrac{(3 s_2 + b_0)^2}{16}, \qquad n\ge 1.
\end{equation*}
This is similar to \eqref{eq:z2}. Therefore $\cC_{1/3}(2)$ is equivalent to $\cC_1(2)$.

For $\cC_{1/3}(4)$ we have $b_1=b_2=0, s_2=-b_0/3$ and $s_1+b_0\ne 0$, and then the elements in this class are of the form
\begin{equation}\label{eq:st4}
	\alpha(n,1/3)=\dfrac{-9 (s_1+b_0)^2}{4(2n +1)^2}.
\end{equation}
There are other 6 classes whose elements have as denominator a constant multiple of $(2n+1)^2$.

For $\cC_{1/3}(12)$ we have $b_1=(4 s_1 + 2 b_0)/3$, $ b_2=0$,   $2 s_1+b_0 -3 s_2\ne 0$, $ s_1+2 b_0 + 3 s_2 \ne 0$, and $s_1 -b_0 -6 s_2 \ne 0$. 
The general element in this class is of the form
\begin{equation}\label{eq:st12}
	\alpha(n,1/3)=\dfrac{-1}{36} ((2 s_1 - 3 s_2 +b_0)n + s_1 + 3 s_2 + 2 b_0) ((2 s_1 - 3 s_2 + b_0)n + s_1 - 6 s_2 - b_0).
\end{equation}
This is a polynomial of degree 2 with nonzero constant term. The elements of $\cC_{1/3}(02)$ and $\cC_{1/3}(012)$ are also polynomials with nonzero constant term.

For $r=1/3$ the classes are 
$$ \cC_{1/3}(0124),$$
$$\cC_{1/3}(124),\quad  \cC_{1/3}(024),\quad \cC_{1/3}(014),\quad \cC_{1/3}(012),   $$
$$\cC_{1/3}(24),\quad \cC_{1/3}(14),\quad \cC_{1/3}(04),\quad \cC_{1/3}(12),\quad \cC_{1/3}(02),  $$
$$ \cC_{1/3}(2),\quad \cC_{1/3}(4). $$
In this list we have deleted the 3  classes that are included in $\cC_0$. 

\subsection{The classes with $r=\frac{1}{2}$}

If we put $r=1/2$ in \eqref{eq:alpharexpl} we obtain
\begin{equation}\label{eq:alfn12}
	\alpha(n,1/2)=\dfrac{((n+1) x_{n-1} +2((n-1)s_2+ s_1)) ((n-1) x_{-n-1} + 2 ((n+1) s_2-s_1))}{4 (4n^2-1)}.
	\end{equation}
This is the general form of the elements in $\cC_{1/2}$.

In this case we have
$$y_0(n,1/2)=\dfrac{n^4}{4}, \qquad y_1(n,1/2)=\dfrac{n^2}{4}, \qquad y_2(n,1/2)=1,\qquad y_3(n,1/2)=\dfrac{-1}{4n^2-1}, $$
and $y_4(n,1/2)=0$.

Let us note that $y_0(n,1/2)$ and $y_1(n,1/2)$ are polynomials with constant term equal to zero. Therefore the elements of $\cC_{1/2}(0)$, $\cC_{1/2}(1)$, and $\cC_{1/2}(01)$ are also polynomials with constant term equal to zero and thus these classes can be considered as subsets of $\cC_0$.

The set $\cC_{1/2}(2)$ is determined by the conditions $b_1=b_2=0$, $s_1=s_2/2 - 3 b_0 /4$, and $2 s_2 + b_0 \ne 0$. In this case we have
$$\alpha(n,1/2)=\dfrac{(2 s_2+b_0)^2}{16}.$$
This is similar to the elements of $\cC_1(2)$  and $\cC_{1/3}(2)$.

For $\cC_{1/2}(12)$ we have $b_2=0$, $s_1=s_2/2+3 b_1/8-3 b_0/4$, $b_1\ne0$,  and $s_2 +b_0/2+ b_1/4 \ne 0$.
The elements in this class are of the form
\begin{equation*}
	\alpha(n,1/2)= \dfrac{1}{64} ( 2 b_1 n +4 s_2 +2 b_0 + b_1)(-2 b_1 n+ 4 s_2 + 2 b_0 + b_1).
\end{equation*}
This is a polynomial of degree 2 with nonzero constant term. The elements of the classes $\cC_{1/2}(02)$ and $\cC_{1/2}(012)$ are polynomials of degree 4 with nonzero constant term.

The elements of $\cC_{1/2}(3)$  have $ b_1=b_2=0$, $s_2= -b_0/2$, and $ s_1 + b_0 \ne 0$ and are of the form
\begin{equation}\label{eq:sh3}
	\alpha(n,1/2)= \dfrac{-( s_1+ b_0)^2}{4 n^2-1}.
\end{equation}
The elements of all the classes corresponding to subsets of $\{0,1,2,3\}$ that contain 3 also have a scalar multiple of $4 n^2 -1$ as denominator.

For $r=1/2$ the classes are 
$$ \cC_{1/2}(0123),$$
$$\cC_{1/2}(123),\quad  \cC_{1/2}(023),\quad \cC_{1/2}(013),\quad \cC_{1/2}(012),   $$
$$\cC_{1/2}(23),\quad \cC_{1/2}(13),\quad \cC_{1/2}(03),\quad \cC_{1/2}(12),\quad \cC_{1/2}(02),  $$
$$ \cC_{1/2}(2),\quad \cC_{1/2}(3). $$

{\bf Remark.} The classes $\cC_r(02),\cC_r(012), \cC_r(12)$, for $r\in \{1, 1/3,1/2\}$ contain polynomials of degree exactly equal to  4 or  2,   with nonzero constant term, but the intersection of corresponding classes with different $r$ is empty because $\alpha(n,1)= \alpha(-n+1,1)$, $\alpha(n,1/3)=\alpha(-n-1,1/3)$ and $\alpha(n,1/2)= \alpha(-n,1/2)$ for all $n$. Only constant polynomials satisfy two of these conditions.

\section{The polynomial sequences in the Askey scheme}
In this section for each of the polynomial sequences in the Askey scheme we indicate with which class it  is associated.

In the book \cite{Hyp} the recurrence coefficients that we call $\alpha_n$ are written as rational functions of $n$ with the  numerator and written as a product of terms of the form $ n+ t_j$, where the $ t_j$ are constants or parameters.
From that form it is easy to find the values of the parameter $r$, the other parameters, and the coefficients $d_k$ in the partial fractions decomposition of $\alpha(n,r)$. This is done  by solving some simple equations.

There are 18 families of orthogonal polynomial sequences listed in \cite{Hyp}.
For each one of them we identify its corresponding  class $\cC_r(A)$. 

The recurrence coefficients of the Wilson polynomials are defined in \cite[eq. 9.1.5]{Hyp} in terms of the  parameters $a,b,c,d$. In this case our parameters are 
$$ r=\dfrac{1}{a+b+c+d},\quad b_0=-a^2, \quad b_1=-2 a-1, \quad b_2=-1, $$
$$ s_1= -r ((b+d) a + d b)c + abc), \qquad s_2= -r(bc+ d b + cd).$$
There are other expressions for these parameters because $a,b,c,d$ appear in a  symmetric way in  $\alpha(n,r)$. 
If the parameters $a,b,c,d$ satisfy certain conditions then the corresponding $\alpha(n,r)$ is in $\cC_r(01234)$, where $r=(a+b+c+d)^{-1}$, that is all the $d_k$ are nonzero. 
Note that $b_2^2=d_0=1$,  independently of the values of $a,b,c,d$. Therefore giving values to the parameters in the $\alpha(n,r)$ of the Wilson polynomials it is not possible to obtain elements in $\cC_r(A)$ if $A$ does not contain 0. 

The recurrence coefficients of the Racah polynomials are defined in \cite[eq. 9.24]{Hyp} using the parameters $\alpha, \beta, \gamma, \delta, N $. In the case with $\alpha+1=-N$  we obtain
$$ r=\dfrac{-1}{N-\beta -1}, \qquad b_0=0, \qquad b_1= \delta + \gamma+2, \qquad b_2=1,$$
$$  s_1= -r (\gamma+1) ( \beta + \delta +1) N, \qquad s_2= -r ( ( \delta+N) \beta - \delta \gamma + N).$$ 
The other cases are similar. If the parameters satisfy certain conditions we can have $d_k\ne 0$, for $0 \le k \le 4.$
Note that we have $b_2^2=d_0=1$, as in the case of the Wilson polynomials.

For the continuous Hahn polynomials the recurrence coefficients are defined in \cite[eq, 9.4.4]{Hyp} with parameters $a,b,c,d$. In this case we obtain
$$ r=\dfrac{1}{a+b+c+d}, \quad b_0=i a, \quad b_1=i, \quad b_2=0,
\quad s_1=i r ( cd-ab), \quad s_2=-i r b,$$
and in the general case the $\alpha(n,r)$ of the continuous Hahn polynomials is in $\cC_r(1234)$, with the $r$ defined above. Let us note that $d_1=4 r^2$ is never equal to zero. 

The recurrence coefficients of the Jacobi polynomials, with parameters $\alpha$ and $\beta$, are defined in \cite[eq. 9.8.5]{Hyp}. 
In this case we have
$$ r=\dfrac{1}{2 +\alpha+\beta}, \quad b_0=1, \quad  b_1=0,\quad  b_2=0,\quad s_1=r(\alpha-\beta), \quad s_2= r.$$
These values give us  the coefficients $d_0=0, d_1=0$ and
$$d_2=\dfrac{64}{(2+\alpha+\beta)^6}, \qquad d_3=\dfrac{64( (\alpha - \beta)^2-1)}{(2+\alpha+\beta)^6}, \qquad d_4=\dfrac{64 (\alpha - \beta)^2}{(2+\alpha+\beta)^6},$$
of the partial fractions decomposition. Note that $d_2$ is nonzero for all values of $\alpha$ and $\beta$. This means that the general Jacobi polynomials are associated with $\cC_r(234)$, where $r=(2 +\alpha+\beta)^{-1}$. 

For the Gegenbauer polynomials, with a parameter $\lambda$, the recurrence coefficients are defined in \cite[eq. 9.8.22]{Hyp}. For this family we have
$$r=\dfrac{1}{2 \lambda+1}, \qquad b_0=1, \qquad b_1=0, \qquad b_2=0,\qquad s_1=0, \qquad s_2=r.$$
The coefficients in the partial fraction decomposition are 
$$d_0=0, \qquad d_1=0,\qquad d_2=\dfrac{64}{(2 \lambda+1)^6},\qquad  d_3=\dfrac{64}{(2 \lambda+1)^6}, \qquad d_4=0.$$
Therefore the Gegenbauer polynomials are associated with the class $\cC_r(23)$, where $r=(2 \lambda+1)^{-1}$, and for any $\lambda$ we see that  $d_2$ and $d_3$ are nonzero.

The polynomials in the Chebyshev family \cite[eq:9.8.41]{Hyp} have $\alpha_n=1/4$ for all $n\ge 1$. The sequences $\alpha_n$ equal to a  constant are in $\cC_{1/2}(2)$, $\cC_1(2)$, and $\cC_{1/3}(2)$. The parameters for these cases are the following. For $r=1/2$
$$r=1/2, \qquad b_0=1, \qquad  b_1=0, \qquad b_2=0,\qquad  s_1=1/2, \qquad s_2=1/2.$$
For $r=1$ 
$$r=1, \qquad b_0=1, \qquad  b_1=0, \qquad b_2=0,\qquad  s_1=0, \qquad s_2=1.$$
For $r=1/3$
$$r=1/3, \qquad b_0=1, \qquad  b_1=0, \qquad b_2=0,\qquad  s_1=0, \qquad s_2=1/3.$$

If $r=1/2$ we obtain $\beta_0=-1/2$ and $\beta_n=0$ for $n\ge 1.$ In the other two cases we have $\beta_n=0$ for all $n\ge 0$.

The Legendre polynomials \cite[eq. 9.8.65]{Hyp} have $\alpha_n=n^2/(4 n^2-1)$. 
For this family we have
$$ r=1/2,\qquad b_0=1, \qquad b_1=0, \qquad b_2=0, \qquad s_1=0, \qquad s_2=1/2. $$
The Legendre polynomials correspond to the class $\cC_{1/2}(23)$. 

For the Bessel polynomials \cite[eq. 9.13.4]{Hyp} with parameter $a$ 
we have 
$$r=\dfrac{1}{a+2}, \qquad b_0=0, \qquad b_1=0, \qquad b_2=0, \qquad s_1=\dfrac{2}{a+2}, \qquad s_2=0.$$
The Bessel polynomials correspond to the class $\cC_r(34)$, where $r=1/(a+2)$. 

The pseudo-Jacobi polynomials \cite[eq. 9.9.4]{Hyp} are associated with the class $\cC_r(234)$, where $r=-1/(2N)$. This case is similar to the one of Jacobi polynomials, but with different parameters.

The Hahn polynomials \cite[eq. 9.5.4]{Hyp}, with parameters $\alpha, \beta, N$, are associated with the class $\cC_r(1234)$, where $r=1/(\alpha+\beta+2)$. This is similar to the case of the continuous Hahn polynomials.

The rest of the polynomials in the Askey scheme not mentioned above are associated with the class $\cC_0$, that is, their sequences $\alpha_n$ are polynomials in $n$ of degree at most 4 that have $\alpha_0=0$. This is the case of the continuous dual Hahn,  dual Hahn, Meixner-Pollaczek, Meixner, 
 Krawtchouk, Laguerre, Charlier, and Hermite polynomials. Therefore the elements of the Askey scheme are associated with only six classes $\cC_r(A)$, with several different values of $r$ and some subsets $A$ of $\{0,1,2,3,4\}$.

In the literature we can find  a large number of hypergeometric orthogonal polynomial sequences that are not listed in the Askey scheme. Most of them are obtained by modifications of elements in the Askey scheme, giving certain values to the parameters, taking limits of some parameters, or modifying the measure or moments functional that determines the orthogonality.

Our classification allows us to construct elements in each class in a systematic algebraic way, without using limits nor modification of measures.

\end{document}